\def\con{\subseteq}
\def\from{\colon}
\def\cont{\mathfrak{c}}
\newcommand\CH{\ensuremath{\mathsf{CH}}}
\def\mathcal#1{\mathscr{#1}}
\def\interval{\mathbb{I}}
\newtheorem{theorem}{Theorem}[section]
\newtheorem{lemma}[theorem]{Lemma}
\theoremstyle{definition}
\newtheorem{example}[theorem]{Example}
\theoremstyle{remark}
\numberwithin{equation}{section}
\def\range{\mathop{\operator@font range}\nolimits}
\begin{document}

\author{Istv\'an Juh\'asz}
\address{Alfr\'ed R\'enyi Institute of Mathematics, Hungarian Academy of Sciences}
\email{juhasz@renyi.hu}
\author{Jan van Mill}
\address{KdV-institute for Mathematics, University of Amsterdam}
\email{j.vanMill@uva.nl}

\title[Preserving maps]{On maps preserving connectedness and/or compactness}

\date{\today}
\keywords{compactness; connectedness; preserving}

\subjclass[2000]{54C05; 54D05; 54F05; 54B10}

\begin{abstract}
We call a function $f\from X\to Y$ \emph{$P$-preserving} if, for every subspace $A \subset X$
with property $P$, its image $f(A)$ also has property $P$.
Of course, all continuous maps are
both compactness- and connectedness-preserving and the natural question
about when the converse of this holds, i.e. under what conditions is such a map continuous, has a long history. 

Our main result is that any non-trivial product function, i.e. one
having at least two non-constant factors, that has connected domain, $T_1$ range, and is connectedness-preserving
must actually be continuous.
The analogous statement badly fails if we replace in it the occurrences of ``connected" by ``compact".
We also present, however, several interesting results and examples concerning maps that are compactness-preserving
and/or continuum-preserving.
\end{abstract}

\thanks{The research and preparation on this paper was supported by NKFIH grant no. K113047. It derives from the authors' collaboration at the R\'enyi Institute in Budapest in
May 2017. }

\vskip -4cm

\maketitle

\hfill \emph{This paper is dedicated to the memory of Bohuslav Balcar.}

\section{Introduction}\label{introduction}
Let $P$ be any topological property and  $X,\,Y$ be topological spaces. We call a function $f\from X\to Y$ \emph{$P$-preserving} if the image $f(A)$ of every subspace $A \subset X$
with property $P$ has property $P$ as well.
%Moreover, we call a function $f\from X\to Y$ \emph{connectedness-preserving} if the image under $f$ of every connected subspace of $X$ is connected in $Y$.
A \emph{preserving} function is one that is both compactness- and connectedness-preserving. Of course, all continuous maps are preserving and the natural question
about when the converse of this holds, i.e. under what conditions is a preserving map continuous, has been treated by  numerous authors.
More details about the history of this question can be found in Gerlits, Juh\'asz, Soukup and Szentmikl\'ossy~\cite{GerlitsJuhaszSoukupSzentmiklossy04}.

White~\cite{White71} proved that if the Tychonov space $X$ is not locally connected at a point $p\in X$, then there exists a preserving function from $X$
into the interval $\interval = [0,1]$ which is not continuous at $p$.

McMillan~\cite{McMillan70} proved that if $X$ is a locally connected Fr\'echet $T_2$-space, then every preserving map of $X$ into a $T_2$-space is continuous. (The Hausdorffness of $X$ is actually
superfluous, as was shown in ~\cite{GerlitsJuhaszSoukupSzentmiklossy04}.)

The main result  of \cite{GerlitsJuhaszSoukupSzentmiklossy04} is that if $X$ is a product of connected linearly ordered spaces and~$Y$ is regular, then each preserving function from $X$ to $Y$ is continuous.
The authors asked in Problem~4.1 whether every preserving function from a locally connected space that is either compact or sequential into a $T_2$-space is continuous.
These basic problems are still unsolved.

In this paper we are interested in the question what happens when a nontrivial product of functions is preserving.
We were surprised to see that just the preservation of connectedness for such a product function with a connected domain already yields its continuity.
For product functions that preserve compactness this is not so, as we will demonstrate by several counterexamples.

We then proceed by considering functions that preserve compactness or compactness and continua, as well as products of such functions.

%\section{Preliminaries}

\section{Connectedness-preserving product functions}

Assume that $f_i\from X_i\to Y_i$ is a function for every element $i$ of an index set $I$. Then $f = \prod_{i\in I} f_i$ denotes the product of the functions $\{f_i : i\in I\}$.
In other words, $f$ maps the cartesian product $X = \prod_{i\in I} X_i$ to the product $Y= \prod_{i\in I} Y_i$
so that for every $x = \langle x_i : i \in I \rangle \in X$ we have $f(x) = \langle f_i(x_i) : i \in I \rangle$.

Before formulating and proving the main result of this section, we present a simple technical lemma about products of connected spaces.

\begin{lemma}\label{lm:ctd}
Assume that $X_i$ for $i \in I$ are connected topological spaces and $X = \prod_{i\in I} X_i$.
As usual, $\pi_i\from X\to X_i$ denotes the $i^{th}$ projection map for every $i \in I$.
Assume, moreover, that we have $J \subset I$ with $|J| \ge 2$ and
for every index $i \in J$ a fixed non-empty subset $A_i \subset X_i$. Then the set
$$A = \bigcup_{i \in J}{\pi_i}^{-1}(A_i) \subset X$$ is connected.
\end{lemma}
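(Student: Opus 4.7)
The plan is to exploit the usual fact that a product of connected spaces is connected, and then cover $A$ by a family of connected ``slices'' glued together along a common connected core.

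First I would observe that for each $i \in J$ and each $a \in A_i$, the slice
$$
S_{i,a} := \pi_i^{-1}(\{a\}) = \{x \in X : x_i = a\}
$$
is homeomorphic to $\prod_{j \in I \setminus \{i\}} X_j$ and hence connected, since a product of connected spaces is connected. Moreover
$$
A = \bigcup_{i \in J} \pi_i^{-1}(A_i) = \bigcup_{i \in J}\bigcup_{a \in A_i} S_{i,a},
$$
so $A$ is covered by the family of connected sets $\{S_{i,a} : i\in J,\, a\in A_i\}$.

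Next I would use the hypothesis $|J|\ge 2$ to pin down a connected ``anchor'' inside $A$. Pick two distinct indices $i_0,i_1 \in J$ and arbitrary points $a_0 \in A_{i_0}$, $a_1 \in A_{i_1}$. Because $i_0\neq i_1$, any point $x \in X$ with $x_{i_0}=a_0$ and $x_{i_1}=a_1$ lies in $S_{i_0,a_0}\cap S_{i_1,a_1}$, so the set $T := S_{i_0,a_0} \cup S_{i_1,a_1}$ is a union of two connected sets with non-empty intersection and is therefore connected, and clearly $T \subset A$.

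Finally, I would show that every slice $S_{i,a}$ with $i\in J$, $a \in A_i$ meets~$T$. If $i \neq i_0$, then picking any $x$ with $x_i=a$ and $x_{i_0}=a_0$ produces a common point of $S_{i,a}$ and $S_{i_0,a_0}\subset T$; if instead $i=i_0$, then $i_1\neq i$, so we can similarly find a common point of $S_{i_0,a}$ and $S_{i_1,a_1}\subset T$. Consequently $S_{i,a}\cup T$ is connected for each such slice, and $A$ is a union of connected sets all containing the connected set $T$, hence connected. The argument is essentially routine once the slice decomposition is in place; the only point needing care is the case distinction showing that every slice meets the anchor $T$, which is precisely where the hypothesis $|J|\ge 2$ is used.
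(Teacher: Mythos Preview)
Your proof is correct and follows essentially the same approach as the paper's: decompose $A$ into the connected slices $\pi_i^{-1}(a)$, build a connected ``anchor'' inside $A$ from a few such slices, and show every slice meets the anchor. The only cosmetic difference is that the paper's anchor is $B=\bigcup_{i\in J}\pi_i^{-1}(a_i)$ built from one chosen point $a_i\in A_i$ for \emph{every} $i\in J$, whereas your anchor $T$ uses just two indices; both work, and in both cases the hypothesis $|J|\ge 2$ is used at exactly the same spot.
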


\begin{proof}
Clearly, for any index $i \in I$ and for any point $a \in Y_i$ the set $\pi_i^{-1}(a) \subset X$ is connected.
Thus if we pick for each $i \in J$ a point $a_i \in A_i$ then the set $$B = \{x \in X : \exists \,i \in J\, \big(\pi_i(x) = a_i \big) \} = \bigcup_{i \in J}{\pi_i}^{-1}(a_i)$$
is also connected because $\bigcap_{i \in J}{\pi_i}^{-1}(a_i) \ne \emptyset$. Obviously, this connected set $B$
is included in $A$.

Now, for each $i \in J$ and $a \in A_i$ the connected set $\pi_i^{-1}(a) \subset A$ intersects $B \subset A$, as is witnessed by any point $x \in X$
that satisfies $\pi_i(x) = a$ and $\pi_j(x) = a_j$ for some $j \in J$ with $i \ne j$. (This is the point where we use $|J| \ge 2$.)
But $A$ is the union of all the connected sets  $\pi_i^{-1}(a)$ with $i \in J$ and $a \in A_i$, consequently $A$ is indeed connected.
\end{proof}

Now we are ready to present our main result.

\begin{theorem}\label{eerstestelling}
Let $X_i$ and $Y_i$ be a topological spaces for every $i\in I$,
where every $X_i$ is connected and every $Y_i$ is $T_1$.
Moreover,  $f_i\from X_i\to Y_i$ be a function for every $i \in I$ such that $f_i$ is non-constant for at least two different indices $i\in I$.
Then the product function $f=\prod_{i\in I} f_i \from \prod_{i\in I} X_i \to \prod_{i\in I} Y_i$ is connectedness-preserving if and only if
$f$, or equivalently, every $f_i$ is continuous.
\end{theorem}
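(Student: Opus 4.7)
The ``if'' direction is routine: a product function is continuous if and only if each factor is, and continuous functions preserve connectedness. For the ``only if'' direction, my plan is to assume $f$ preserves connectedness and derive continuity of every $f_{i_0}$ by contradiction. If some $f_{i_0}$ fails to be continuous at a point $x_{i_0}^\ast \in X_{i_0}$, then $f_{i_0}$ is in particular non-constant, and the hypothesis furnishes a further index $i_1 \neq i_0$ with $f_{i_1}$ non-constant. I would first reduce to $|I|=2$ by fixing arbitrary $c_j \in X_j$ for $j \notin \{i_0, i_1\}$ and working inside the slice $X_{i_0} \times X_{i_1} \times \prod_{j\neq i_0,i_1}\{c_j\}$; the restriction of $f$ to this slice is (up to a constant in the suppressed coordinates) just the two-factor product $f_{i_0} \times f_{i_1}$, and a connectedness-failure there pulls back to one for $f$. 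After renaming, I assume $I = \{0,1\}$.

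The non-continuity of $f_0$ at $x_0$ yields an open $V \subset Y_0$ with $y_0 := f_0(x_0) \in V$ and $x_0 \in \overline{B}$, where $B := f_0^{-1}(Y_0 \setminus V)$; note $x_0 \notin B$ and $B \neq \emptyset$. I pick $a, b \in X_1$ with $f_1(a) \neq f_1(b)$, and use the $T_1$ property of $Y_1$ to make $W := Y_1 \setminus \{f_1(b)\}$ open, with $f_1(a) \in W$. The crucial construction is then
\[
A := (B \times X_1) \cup (X_0 \times \{b\}) \cup \{(x_0,a)\}.
\]
The first two pieces form $\pi_0^{-1}(B) \cup \pi_1^{-1}(\{b\})$, connected by Lemma~\ref{lm:ctd} applied with $J = \{0,1\}$; and $(x_0,a) \in \overline{B} \times X_1 = \overline{B \times X_1}$, so adjoining it keeps $A$ connected.

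To finish I would verify that $f(A) \cap (V \times W) = \{(y_0, f_1(a))\}$: points of $B \times X_1$ have first image-coordinate in $Y_0 \setminus V$, points of $X_0 \times \{b\}$ have second image-coordinate $f_1(b) \notin W$, and only $(x_0,a)$ itself maps into $V \times W$. Since $V \times W$ is open in $Y_0 \times Y_1$, the singleton $\{(y_0, f_1(a))\}$ is open in $f(A)$; since $Y_0 \times Y_1$ is $T_1$, it is also closed. As $(y_0, f_1(b)) = f(x_0,b) \in f(A)$ is a distinct second point, this is a proper non-empty clopen subset, forcing $f(A)$ disconnected --- contradicting that $A$ is connected and $f$ preserves connectedness. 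The main obstacle I foresee is choosing $A$ correctly: the first attempt $\{(x_0,a)\} \cup (B \times \{a\})$ breaks down because $B$ need not be connected (it could even be totally disconnected). The remedy is to exploit the second non-constant factor via the horizontal line $X_0 \times \{b\}$, which through Lemma~\ref{lm:ctd} welds all the vertical fibers $\{x\} \times X_1$, $x \in B$, into a single connected set; the open rectangle $V \times W$ --- genuinely open in both factors, one via $V$ and the other via $T_1$-ness --- then isolates the image of the bridging point $(x_0, a)$.
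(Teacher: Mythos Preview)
Your proof is correct and follows essentially the same strategy as the paper's: build a connected set of the form $\bigcup_{i\in J}\pi_i^{-1}(A_i)$ via Lemma~\ref{lm:ctd}, adjoin a closure point, and then exhibit a basic open box in the $T_1$ range that isolates the image of that added point. The only cosmetic difference is that you first reduce to two factors and hand-pick the box $V\times W$, whereas the paper keeps all (non-constant) factors and lets the box arise from a hypothetical neighborhood witnessing $f(x)\notin\overline{f(S)}$; the underlying mechanism is identical.
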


\begin{proof}
Let $X = \prod_{i\in I} X_i$ and $Y= \prod_{i\in I} Y_i$. For every $i\in I$, let $\pi_i\from X\to X_i$ denote the projection as above,
and $\rho_i$ be the $i^{th}$ projection from $Y$ to $Y_i$.
Of course, we may assume without any loss of generality that $|I| \ge 2$ and $f_i$ is non-constant for every $i \in I$.
Obviously, we only have to prove that if $f$ is connectedness-preserving then it is continuous.

So, assume  that $f$ is connectedness-preserving and take a point $x = \langle x_i : i \in I \rangle \in X$ and a set $S \subset X$ such that $x \in \overline{S}$.
We have to show that then $f(x) \in \overline{f(S)}$. Assume, on the contrary, that $f(x) \notin \overline{f(S)}$, i.e. $f(x)$ has
an open neighborhood $U$ in $Y$ that misses $f(S)$. 

We may assume that $U = \bigcap_{i \in J} {\rho_i}^{-1}(U_i)$ where $J$ is a finite subset of $I$
with $|J| \ge 2$ and $U_i$ is an open proper subset of $Y_i$ for each $i \in J$. The latter assumption is justified because for every $i \in J$ there is
a point $z_i \in X_i$ with $f_i(z_i) \ne f_i(x_i)$ as $f_i$ is non-constant, and hence $Y_i \setminus \{f_i(z_i)\}$ is a neighborhood of $f_i(x_i)$
as $Y_i$ is $T_1$.

We may then apply our Lemma \ref{lm:ctd} to the finite index set $J$ and the non-empty sets $A_i = f_i^{-1}(Y_i \setminus U_i)$ to conclude that
$$A = \bigcup_{i \in J}{\pi_i}^{-1}(A_i) \subset X$$ is connected.

But then  $f(S) \cap U = f(S) \cap \bigcap_{i \in J} {\rho_i}^{-1}(U_i) = \emptyset$ clearly implies that $S \subset A$, consequently we have $x \in \overline{A}$,
hence the set $A \cup \{x\}$ is connected as well. On the other hand, we have $U \cap f(A \cup \{x\}) = \{f(x)\}$, hence $f(x)$ is an isolated point
of the non-singleton set $f(A \cup \{x\})$.
So, $f(A \cup \{x\})$ is disconnected, contradicting our assumption that $f$ is connectedness-preserving.
\end{proof}

It is an immediate consequence of this theorem that if a function $f$ with connected domain and $T_1$ range is connectedness-preserving but not continuous
then its square $f^2 = f \times f$ cannot be connectedness-preserving. The following simple example illustrates this.

\begin{example}\label{sin}
Let $f : \mathbb{I} \to [-1,1]$ be the function defined by $f(x) = \sin(1/x)$ for $x \ne 0$ and $f(0) = 0$.
Then $f$ is connectedness-preserving but $f^2$ is not.
\end{example}

It is very easy to check that $f$ is connectedness-preserving and it is trivially not continuous. But a direct verification of the fact
that $f^2$ is not connectedness-preserving requires some effort.

Of course, the condition in Theorem~\ref{eerstestelling} that there are at least two factors on which the respective functions are nonconstant is essential.
If all but one of them are constant then we are actually dealing with just one connectedness-preserving map and  from that nothing new can be concluded.

Also, any map whose domain is hereditarily disconnected is trivially connectedness-preserving, hence the assumption in Theorem~\ref{eerstestelling} that
all the factor spaces $X_i$ are connected is very natural.

\section{Compactness-preserving functions}

In this section we present several interesting examples of functions and product functions which, instead of connectedness preserving, are
compactness-preserving. Let us note first that Theorem~\ref{eerstestelling} badly fails if we replace in it both occurrences
of ``connected" by ``compact". This is because clearly any function $f$ with finite range is compactness-preserving, moreover so is any finite
power $f^n$ of it, hence any discontinuous map with finite range yields a counterexample.

We do have, however, a result that gives a simple condition under which compactness-preserving maps are automatically continuous.

\begin{theorem}\label{tm:fiber}
Assume that $f : X \to Y$ is a compactness-preserving function with compact fibers such that both $X$ and $\,Y$ are $T_2$ and $X$ is compact.
Then $f$ is continuous.
\end{theorem}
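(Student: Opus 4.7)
My plan is to show that the graph $G = \{(z, f(z)) : z \in X\}$ of $f$ is closed in $X \times Y$ and then deduce continuity from the standard closed-projection argument. A preliminary reduction: because $X$ is compact and $f$ is compactness-preserving, $f(X)$ is a compact, hence closed, subset of the Hausdorff space $Y$; so I may replace $Y$ by $f(X)$ and assume from the outset that $Y$ itself is compact Hausdorff. The space $X$ is then compact Hausdorff, hence normal, and every fiber $f^{-1}(y)$, being compact in a Hausdorff space, is closed.

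To prove that $G$ is closed, I suppose for contradiction that $(x, y) \in \overline{G}$ with $y \ne f(x)$, and pick a net $x_\alpha \to x$ with $f(x_\alpha) \to y$. If cofinally many $\alpha$ satisfy $f(x_\alpha) = y$, the corresponding subnet lies in the closed set $f^{-1}(y)$, forcing $x \in f^{-1}(y)$, a contradiction; so after passing to a tail I may assume $f(x_\alpha) \ne y$ throughout. By normality of $X$, the disjoint closed sets $\{x\}$ and $f^{-1}(y)$ admit disjoint open neighbourhoods $V \ni x$ and $U \supseteq f^{-1}(y)$, whence $\overline{V}$ misses $f^{-1}(y)$. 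Truncating once more so that every $x_\alpha$ lies in $V$, I put
\[
A = \overline{V} \cap \overline{\{x_\alpha : \alpha\}}.
\]
Then $A$ is closed in the compact space $X$, hence compact, and contains $x$ along with every $x_\alpha$. By construction $f(A) \subseteq Y \setminus \{y\}$, while $y \in \overline{f(A)}$ because $f(x_\alpha) \to y$. Thus $f(A)$ fails to be closed in Hausdorff $Y$ and so cannot be compact, contradicting compactness-preservation of $f$.

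With the graph closed, continuity is immediate: for any closed $C \subseteq Y$ one has $f^{-1}(C) = \pi_X\bigl(G \cap (X \times C)\bigr)$, and the projection $\pi_X \from X \times Y \to X$ is a closed map because $Y$ is compact.

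The main obstacle is the construction of $A$. The naive choice $A = \overline{\{x_\alpha : \alpha\}}$ is not good enough, since its image is compact, hence closed in $Y$, and therefore already contains $y$, yielding no contradiction. The key move is to exploit compactness of the fiber $f^{-1}(y)$ together with normality of $X$ to excise the whole fiber from a compact neighbourhood of the tail of the net, thereby keeping $y$ out of $f(A)$ while preserving the convergence $f(x_\alpha) \to y$ in its closure.
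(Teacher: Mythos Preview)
Your proof is correct, but it takes a different route from the paper's. You argue via the closed-graph theorem: after reducing to compact $Y$, you use a net to show that the graph of $f$ is closed and then invoke the closed projection $\pi_X$. The paper instead verifies directly that preimages of open sets are open. It fixes an open $U \subseteq Y$ and, assuming $f^{-1}(U)$ is not open, a point $x \in f^{-1}(U)$ lying in the closure of $X \setminus f^{-1}(U)$; it then considers the family $\mathcal{C}$ of all compact neighbourhoods of the fiber $f^{-1}(f(x))$, observes that $\{f(C) : C \in \mathcal{C}\} \cup \{Y \setminus U\}$ is a centered family of closed sets in the compact space $Y$, and derives a contradiction from any point $y$ in its intersection, since the compact fiber $f^{-1}(y)$ can be separated from $f^{-1}(f(x))$ by some $C \in \mathcal{C}$. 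Both arguments hinge on the same idea---use compactness of a fiber together with normality of $X$ to excise that fiber from a suitable compact set---but the paper packages this as a centered-family argument with no mention of nets or the graph, while your version is the classical closed-graph-plus-closed-projection manoeuvre.

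One small remark on your write-up: after you truncate so that every $x_\alpha$ lies in $V$, the closure $\overline{\{x_\alpha\}}$ is already contained in $\overline{V}$, so your set $A$ simply equals $\overline{\{x_\alpha\}}$; the intersection with $\overline{V}$ is redundant. The real content, as your final paragraph correctly identifies, is the normality step that produces $V$ with $\overline{V} \cap f^{-1}(y) = \emptyset$ before taking the closure.
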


\begin{proof}
Clearly, we may assume that $f$ is a surjection, hence $Y$ is also compact.
Let $U$ be any open subset of $Y$ and assume that $f^{-1}(U)$ is not open in $X$. This means that there is a point $x \in f^{-1}(U)$
such that $x \in \overline{A}$ where $A = X \setminus f^{-1}(U)$.

Using that $X$ is compact $T_2$, and hence $T_4$, the family $\mathcal{C}$ of all closed, hence compact, neighborhoods of
the compact set $f^{-1}(f(x))$ in $X$ satisfies $\bigcap\,\mathcal{C} = f^{-1}(f(x))$. But every neighborhood of $f^{-1}(f(x))$ is also a neighborhood of $x$,
hence we have $C \cap A \ne \emptyset$ for all $C \in \mathcal{C}$.
It follows that $\{f(C) : C \in \mathcal{C}\} \cup \{Y \setminus U\}$ is a centered collection of closed sets in $Y$, hence there is a point $y$
in its intersection. But $f^{-1}(y) \subset X$ is also compact and clearly $f^{-1}(f(x)) \cap f^{-1}(y) = \emptyset$, hence there is some $C \in \mathcal{C}$
such that $C \cap  f^{-1}(y) = \emptyset$ as well. But this implies $y \notin f(C)$, contradicting our choice of $y$ and thus completing the proof.
\end{proof}

We now present an example which, in analogy with Example \ref{sin}, shows that the square of a compactness-preserving function may fail to be compactness-preserving.

\begin{example}\label{eersteex}
There is a compactness-preserving surjection $f\from \interval\to \omega{+}1$ such that its square $f^2\from \interval^2\to (\omega{+}1)^2$ is not compactness-preserving.
\end{example}

\begin{proof}
Let $t_0 = 0$ and for every $n \ge 1$, let $t_n = 1-\frac{1}{n+1}$. In addition, for every $n <\omega$ let $D_n \con [t_n,1)$ be a countable dense set such that if $n\not= m$ then $D_n\cap D_m=\emptyset$. Define $f\from \interval\to \omega{+}1$, as follows:
$$
f(s) = \begin{cases}
        n   &  (s\in D_n), \\
        0   &  (s\in [0,1)\setminus \bigcup_{n< \omega} D_n), \\
        \omega   &  (s=1).
       \end{cases}
$$
We first show that $f$ is compactness-preserving. To see this, let us note that any $C \subset \omega + 1$ is compact if $\omega \in C$. Consequently, if
$K$ is a compact subset of $\interval$ for which $1 \in K$ then $f(K)$ is compact. But if $1 \notin K$ then the compactness of $K$ implies that $K$ is
bounded below $1$, hence it may intersect only finitely many of the sets $D_n$. Thus, in this case $f(K)$ is finite.

We next will show that $f^2$ is not compactness-preserving. Pick a sequence $(d_n)_n$ in $D_0$ such that $d_n \to 1$. Moreover, for every $n$ let $e_n\in D_n$ be arbitrary. Observe that the sequence $(e_n)_n$ converges to 1 as well. Hence the sequence $(d_n,e_n)_n$ converges to (1,1) in $\interval^2$. Put $K = \{(d_n.e_n) : n <\omega\}\cup \{(1,1)\}$. Then $K$ is compact, but the infinite set
$$
    f(K) \subset \{(0,n) : n\in \omega\} \cup \{(\omega,\omega)\}
$$
is clearly not.
\end{proof}

We have noted above that any function $f\from \interval\to\interval$ which is both connectedness- and compactness-preserving is continuous.
So if we want to detect `bad behavior' in such functions which are connectedness-preserving, we have to relax the compactness-preserving condition.
It seems that the best we can hope for in this respect is to consider functions that are \emph{continuum-preserving}.
(That is, functions $f$ having the property that the image under $f$ of any continuum is a continuum.)

Clearly, any function $f\from \interval\to\interval$ is connectedness-preserving iff the image of any interval is an interval,
moreover it is continuum-preserving iff the image of any closed interval is a closed interval.
As any subinterval of  $\interval$ is an increasing union of closed intervals, it is easy to show that every  continuum-preserving
function $f\from \interval\to\interval$ is automatically connectedness-preserving.
Our next example yields a function $f\from \interval\to\interval$ that has the stronger of these two properties but its square misses even the weaker one.

\begin{example}\label{tweedeex}
There is a function $f\from \interval\to \interval$ that is continuum-preserving but for some arc $J\con \interval^2$ we have that $f^2(J)$ is not compact.
Consequently, $f^2$ is not connectedness-preserving.
\end{example}

\begin{proof}
Let $\{(r_n, s_n) : n < \omega\}$ enumerate all nontrivial open intervals in $(0,1)$ with rational endpoints.
For every $n$ let $K_n\con (r_n,s_n)$ be a Cantor set such that $K_n\cap K_m=\emptyset$ if $n \not= m$,
moreover $f_n\from K_n\to \interval$ be a surjection. We then define $f\from \interval \to \interval$, as follows:
$$
    f(s) = \begin{cases}
            f_n(s) & (s\in K_n, n< \omega), \\
            0      & (s\in \interval\setminus \bigcup_{n<\omega} K_n).
           \end{cases}
$$
Since every non-degenerate subinterval of $\interval$ contains one of the $K_n$'s it is mapped by $f$ onto $\interval$, hence $f$ is trivially continuum-preserving.

Let us fix a strictly increasing sequence $(t_n)_{n<\omega}$  in $(0,\frac{1}{2})$ converging to $\frac{1}{2}$.
We shall construct a polygonal arc $J$ in $[0,\frac{1}{2}]^2$ as the union of countably many vertical segments $V_n$ and countably many horizontal segments $H_n$ for $n < \omega$
and of a compactifying point as follows.
The segment $V_0$ has the form $\{0\}\times [0,\alpha_0]$, where $\alpha_0 < \frac{1}{4}$ and $f(\alpha_0) = t_0$. Since $f((0,\frac{1}{4}))=\interval$,
it is clear that such a value $\alpha_0$ exists. Observe that $f^2(V_0)= \{0\}\times \interval$.
The segment $H_0$ has the form $[0,\beta_0]\times \{\alpha_0\}$, where $\beta_0 < \frac{1}{4}$ and $f(\beta_0) = t_0$.
Observe that $f^2(H_0) = \interval\times \{t_0\}$. The segment $V_1$ has the form $\{\beta_0\}\times [\alpha_0,\alpha_1]$,
where $\alpha_1 - \alpha_0 < \frac{1}{8}$ and such that $f(\alpha_1) = t_1$. Observe that $f(V_1) = \{t_0\} \times \interval$.
The segment $H_1$ has the form $[\beta_0,\beta_1]\times \{\alpha_1\}$, where $\beta_1-\beta_0 < \frac{1}{8}$ and $f(\beta_1) = t_1$. Observe that $f^2(H_1) = \interval\times \{t_1\}$.

It should be clear how to continue this for all $n < \omega$ to obtain the strictly increasing sequences of values $\alpha_n, \, \beta_n$ with $f(\alpha_n) = f(\beta_n) = t_n$ satisfying both
$\alpha_{n^+1} - \alpha_n < 1/{2^{n+3}}$ and $\beta_{n^+1} - \beta_n < 1/{2^{n+3}}$, and then put $V_n = \{\beta_{n-1}\}\times [\alpha_{n-1},\alpha_n]$ and $H_n = [\beta_{n-1},\beta_n]\times \{\alpha_n\}$.
It is also clear that we shall then have $f^2(V_n) = \{t_{n-1}\} \times \interval$ and $f^2(H_n) = \interval\times \{t_n\}$ for all $n$.

Let $\beta = \sup \{\beta_n : n <\omega\}$ and $\alpha = \sup \{\alpha_n : n < \omega\}$, respectively.
Then clearly  $\alpha,\beta < \frac{1}{2}$, and the point $(\alpha,\beta)$ compactifies $\bigcup_{n<\omega} H_n \cup \bigcup_{n<\omega} V_n$, i.e.
$$
    J = \bigcup_{n<\omega} H_n \cup \bigcup_{n<\omega} V_n \cup \{(\beta,\alpha)\}
$$
is an arc.

Its image $f(J)$ contains the points $\{(1,t_n): n < \omega\}$ which converge to $(1,\frac{1}{2})$ and it also contains
the points$\{(t_n,1): n < \omega\}$ which converge to $(\frac{1}{2},1)$.
It is also clear from our construction that  $\{(1,\frac{1}{2}), (\frac{1}{2},1)\} \cap f(J\setminus \{(\beta,\alpha)\})=\emptyset$.
This, however, implies that $f^2(J)$ does not contain both $(1,\frac{1}{2})$ and $(\frac{1}{2},1)$ and therefore is not compact.

Consequently, $f^2$ is not continuum-preserving, and therefore it is not continuous. But then by Theorem \ref{eerstestelling}
it is not connectedness-preserving either.
\end{proof}

Next we show, using a slight variation of Example~\ref{tweedeex}, that a connectedness-preserving function $f\from\interval\to \interval$
is not necessarily continuum-preserving.

\begin{example}\label{derdeex}
There is a function $f\from \interval\to \interval$ which is connectedness-preserving but not continuum-preserving.
\end{example}

\begin{proof}
As in the proof of Example~\ref{tweedeex}, let $\{K_n : n < \omega\}$ be a pairwise disjoint sequence of Cantor sets in $(0,1)$ such that every nondegenerate interval in $\interval$ contains one of them.
Now, for every $n$ let $f_n$ be a surjection of $K_n$ not ont $\interval$ but onto the open interval $(0,1)$, and then define $f\from \interval \to \interval$ as follows:
$$
    f(s) = \begin{cases}
            f_n(s) & (s\in K_n, n< \omega), \\
            0      & (s=0), \\
            1      & (s=1),\\
            \frac{1}{2} & (s\in (0,1)\setminus \bigcup_{n<\omega} K_n).
           \end{cases}
$$
This function is connectedness-preserving for the same reason as the function from Example~\ref{tweedeex} is, but not continuum preserving since $f([0,\frac{1}{2}]) = [0,1)$.
\end{proof}

Our above observation that every  continuum-preserving
function $f\from \interval\to\interval$ is automatically connectedness-preserving obviously generalizes to any map from one connected LOTS to another.
As we may expect, this implication fails in general, even within the class of metric compacta. We present next two examples that demonstrate this.

\begin{example}\label{vierdeex}
For every non-degenerate indecomposable metrizable continuum $K$ there is a map $f\from K\to \interval$ which is continuum-preserving but not connectedness-preserving.
\end{example}

\begin{proof}
Let $K$ be any non-degenerate indecomposable metrizable continuum (for example, the pseudo-arc).
For every point $x\in K$, let $K_x$ denote the composant of $x$ in $K$. That is, $K_x$ is the union of all proper subcontinua of $K$ that contain $x$.
It is known that every $K_x$ is dense in $K$ and that every $K_x$ as well as the collection $\{K_x : x\in K\}$ has cardinality $\cont$.
For details, see Nadler~\cite{nadlerContinnuumTheory}.

Pick two distinct points $x$ and $y$ in $K$.
It is clear that there is a function $f\from K\to \interval$ which has the following properties:
\begin{enumerate}
\item $f$ is constant on each $K_z$ and, in particular, $f(K_x) = \{0\}$ and $f(K_y) = \{1\}$,
\item $f$ is surjective.
\end{enumerate}
Then $f$ maps every proper subcontinuum of $K$ to a single point, while $f(K) = \interval$.
On the other hand, it is clear from the density and connectivity of $K_x$ that $K_x\cup K_y$ is connected as well, while it is mapped onto $\{0,1\}$. Hence $f$ is as required.

\end{proof}

\begin{example}\label{plane}
Let $K$ be any non-degenerate metric continuum such that no countable subset of $K$ separates $K$ (for example, the square $\interval^2$).
Then there is a map $f\from K\to \interval$ which is continuum-preserving but not connectedness-preserving.
\end{example}

\begin{proof}
First split $K$ into two sets $A$ and $B$ which meet every Cantor subset of $K$ (a Bernstein decomposition). Then $A$ is dense and connected. Dense is clear.
If $A$ were disconnected, we could split it into two disjoint nonempty relatively open sets $U$ and $V$.
Then there are disjoint open subsets $E$ and $F$ of $K$ such that $E \cap A = U$ and $F \cap A = V$. But the closed set $S = K \setminus (E\cup F)$ separates $K$,
and therefore has to be uncountable.
Hence $S$ contains a Cantor set, and so meets $A$, a contradiction.

Now split $A$ into two nonempty disjoint sets $A_0$ and $A_1$.
Let $\mathcal{C}$ be the family of all Cantor sets in $K$. Then $C \cap B$ has size $\mathfrak{c}$ for every $C \in \mathcal{C}$.
By Kuratowski's disjoint refinement theorem, then
there is a family $\mathcal{S}$ consisting of $\mathfrak{c}$ many pairwise disjoint subsets of $B$, each of size $\mathfrak{c}$,
such that every member of $\mathcal{C}$ includes an element  of $\mathcal{S}$.

Now define the function $f: K \to \mathbb{I}$ by the following stipulations:
$f(A_0) = \{0\}, f(A_1) = \{1\}$, and
each member of $\mathcal{S}$ is mapped onto $\mathbb{I}$ by $f$.
Then $f$ is continuum-preserving since each non-degenerate subcontinuum of $K$ contains a Cantor set and hence a member of $\mathcal{S}$.
But $f$ is not connectedness-preserving because $f(A) = \{0,1\}$.
\end{proof}

Theorem~1.8 in \cite{GerlitsJuhaszSoukupSzentmiklossy04} says that if $X$ is a locally connected and Fr\'echet space then any preserving, i.e.
both compactness- and connectedness-preserving map on $X$ into a $T_2$ space is continuous.
Its proof is based on Lemma~1.7 in that paper which is easily seen to be true
for continuum-preserving functions in the special case in which the domain of the map is also locally compact. Hence the following is true:

\begin{theorem}\label{tm:Frechet}
Assume that $X$ is locally compact, locally connected and Fr\'echet, moreover $Y$ is $T_2$. Then every map
$f\from X\to Y$ that is both continuum-preserving and compactness-preserving is continuous.
\end{theorem}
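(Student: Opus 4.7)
The plan is to follow the proof of Theorem~1.8 in \cite{GerlitsJuhaszSoukupSzentmiklossy04} essentially verbatim, with two substitutions: ``continuum'' in place of ``connected compact set'' and ``continuum-preserving'' in place of ``connectedness-preserving.'' The only nontrivial adaptation is to the auxiliary Lemma~1.7 of that paper, and the paragraph immediately preceding the statement asserts that this adaptation is routine once $X$ is also locally compact, because then $X$ has a neighborhood base at every point consisting of open connected sets with continuum closure -- obtained by taking the component of the point in any open set with compact closure.

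Concretely, I would argue by contradiction. Suppose $f$ is discontinuous at some $p\in X$. Then there is an open $V\con Y$ with $f(p)\in V$ and $p\in\overline{f^{-1}(Y\setminus V)}$, and the Fr\'echet property of $X$ delivers a sequence $(x_n)_{n<\omega}\con f^{-1}(Y\setminus V)$ with $x_n\to p$ and $x_n\ne p$ for all $n$. Applying compactness-preservation to the compact set $\{p\}\cup\{x_n:n<\omega\}$ makes $\{f(p)\}\cup\{f(x_n):n<\omega\}$ compact in the $T_2$ space $Y$; since $f(p)\in V$ while each $f(x_n)$ lies in the closed set $Y\setminus V$, no cluster point of $(f(x_n))$ can equal $f(p)$. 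Thus, after passing to a subsequence, I may assume either that $f(x_n)=y$ is constant for some $y\in Y\setminus V$ distinct from $f(p)$, or that $f(x_n)\to y$ with $y=f(x_{n_0})$ for some such $y$ and some $n_0<\omega$.

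The main step is the construction of a continuum $L\con X$ containing $p$ together with a tail of the sequence $(x_n)$, chosen so that $f(p)$ is an isolated point of the non-degenerate set $f(L)$. This is the continuum-preserving analogue of the connected set supplied by Lemma~1.7 of \cite{GerlitsJuhaszSoukupSzentmiklossy04}; its construction mimics the one given there, but at each step the connected piece guaranteed by local connectedness is now replaced by a sub-continuum, which is available because $X$ is also locally compact. Continuum-preservation then forces $f(L)$ to be a continuum in $Y$; but a non-degenerate continuum in a $T_2$ space cannot have an isolated point, so we reach the desired contradiction.

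The main obstacle is, as expected, this continuum construction: ensuring simultaneously that $L$ is compact, that $L$ is connected, that $L$ contains enough of the sequence $(x_n)$ to make $f(L)$ non-degenerate, and that $L$ maps under $f$ so that $f(p)$ actually becomes isolated in $f(L)$. The compactness and connectedness are provided automatically by the continuum neighborhood base at $p$ coming from local compactness and local connectedness; the isolation of $f(p)$ requires the same inductive bookkeeping used in the proof of Lemma~1.7 of \cite{GerlitsJuhaszSoukupSzentmiklossy04}, which -- as the paper observes -- carries over without essential change once sub-continua are available in place of merely connected sets.
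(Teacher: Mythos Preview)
Your proposal is correct and follows exactly the route the paper indicates: the paper gives no detailed proof, only the remark that Theorem~1.8 of \cite{GerlitsJuhaszSoukupSzentmiklossy04} goes through once Lemma~1.7 there is adapted to produce continua (using local compactness) rather than merely connected sets, and this is precisely what you sketch. Your write-up in fact supplies more detail than the paper itself, but the underlying strategy is identical.
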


Let us note that by the result of White~\cite{White71} mentioned in the introduction, the conclusion of Theorem  \ref{tm:Frechet} fails for any Tychonov space
$X$ that is not locally connected. Indeed, this is because any preserving map is trivially continuum-preserving.

Our last example shows that, at least under the Continuum Hypothesis (abbreviated: \CH), we can even get a locally connected and compact $T_2$ space for which
the conclusion of Theorem  \ref{tm:Frechet} fails. Of course, such a space cannot have the Fr\'echet property, and our example, not surprisingly, will be
kind of ``anti-Fr\'echet".

\begin{example}[\CH]\label{zesdeex}
There are a locally connected Hausdorff continuum $X$ and a function $f\from X\to \interval$ such that $f$ is both compactness- and continuum-preserving but not connectedness-preserving.
\end{example}

\begin{proof}
Let $X$ be the locally connected $T_2$ continuum of weight $\cont$ containing no convergent sequences that was constructed, using CH, 
by van Mill~\cite{MillLocallyConnected2002}. 
(Under the stronger assumption $\Diamond$, even a 1-dimensional, locally connected, and hereditarily separable continuum without convergent sequences was constructed by Hart and Kunen~\cite{HartKunen09}.)

It is well-known that then
every infinite compact subspace of $X$ has cardinality $2^{\omega_1}=2^\cont$ and there are at most $2^\cont$ such compacta since the weight of $X$ is $\cont$. We can consequently partition 
$X$ into a collection $\{A_t : t\in\interval\}$
of sets $A_t$ intersecting every infinite closed subset of $X$. (In fact, such a family can even be chosen to be of cardinality $2^\cont$ by Kuratowski's disjoint refinement theorem.) 
Let then $f\from X\to \interval$ be the function that sends every $A_t$ onto the point $t$. Clearly, $f$ sends every infinite compact subset of $X$ onto $\interval$, and hence is both compactness- and continuum-preserving.
Since every non-empty open set in $X$ includes an infinite compact subset, it is clear that every $A_t$ is dense in $X$.
 
We claim that from the construction in van Mill~\cite{MillLocallyConnected2002} it follows that our space $X$ cannot be separated by any finite subset (the same is true for the construction in \cite{HartKunen09}). But if this is true, then every dense set $A_t$ intersects every closed set that separates $X$, 
and this means that $A_t$ is connected. This implies then that $A_0\cup A_1$ is connected, while its image $f(A_0\cup A_1) = \{0,1\}$ is not.

To verify our claim, let us assume that the space $X$ in \cite{MillLocallyConnected2002} has a cut point, say $x$.  (The same proof will show that no finite set separates $X$.) We adopt the notation in \cite{MillLocallyConnected2002}. Hence $X= M = M_{\omega_1}$ is the limit of the inverse system of length $\omega_1$
presented in~\cite{MillLocallyConnected2002}, in which $M_\alpha$ is the Hilbert cube for each $\alpha < \omega_1$.
Since all bonding maps of this inverse system
are monotone, so is the projection $\pi=\pi^{\omega_1}_0\from M\to M_0$. 

Since $x$ is a cut point of $M$, we may write $M\setminus \{x\}$ as $U\cup V$, where $U$ and $V$ are nonempty pairwise disjoint open sets. Then for every 
$q\in M_0\setminus \{\pi(x)\}$ we have that $\pi^{-1}(q)$ is a continuum contained either in $U$ or in $V$.
So, if we put $E= \{q\in M_0\setminus \{p\}: \pi^{-1}(q)\con U\}$ and $F= \{q\in M_0\setminus \{p\}: \pi^{-1}(q)\con V\}$, respectively, then $E$ and $F$ 
would demonstrate that $\pi(x)$ is a cut point of the Hilbert cube $M_0$, which is absurd.
\end{proof}

%\bibliographystyle{\br{michael}}
%\bibliographystyle{alpha}
%\bibliography{\br{strings},\br{mill},\br{publ},\br{eric},mybib}

\def\cprime{$'$}
\makeatletter \renewcommand{\@biblabel}[1]{\hfill[#1]}\makeatother

\end{document}